\newtheorem{theorem}{Theorem}
\newtheorem{lemma}[theorem]{Lemma}
\theoremstyle{definition}
\theoremstyle{remark}
\title{Dense semigroups of triangular matrices}
\author{ Mohammad Javaheri  \\ Department of Mathematics \\ Siena College, School of Science \\ 515 Loudon Road, Loudonville, NY 12211
\\ \small{Email: mjavaheri@siena.edu}  
}
\begin{document}
\maketitle

\begin{abstract}
Let $\mathbb{K}=\mathbb{R}$ or $\mathbb{C}$, and $T_n(\mathbb{K})$ be the set of $n\times n$ lower triangular matrices with entries in $\mathbb{K}$. We show that $T_n(\mathbb{K})$ has dense subsemigroups that are generated by $n+1$ matrices.  \end{abstract}

{\small {\emph{2010 Mathematics Subject Classification: Primary 47D03; Secondary 20H20.} 

\emph{Key words and phrases:} Dense subsemigroups, Triangular matrices.}}

\section{Introduction}
Dense subgroups of Lie groups have been studied by many mathematicians, dating back to Auerbach, who proved that every compact semisimple Lie group has a 2-generator dense subgroup \cite{Au}. Kuranishi \cite{Ku} proved that if $G$ is a semisimple Lie group and $a,b$ are two elements near the identity, then the subgroup generated by $a$ and $b$ is dense in $G$ if and only if $\log a$ and $\log b$ generate the Lie algebra $\cal G$. More recently, Abels and Vinberg \cite{AV} showed that every connected semisimple Lie group with finite center has 2-generator dense subsemigroups. In the real case, Breuillard and Gelander \cite{BG} proved that every dense subgroup of a connected semisimple real Lie group has a 2-generator dense subgroup.

Dense subgroups of linear spaces in particular have also been studied. Wang \cite{xia} has shown that every dense subgroup of the group of orientation preserving M\"obius transformations on the $n$-dimensional unit sphere has a dense subgroup generated by at most $n$ elements, where $n\geq 2$. A similar statement was proved by Cao in \cite{cao} for $U(n,1)$. Finally, examples of 2-generator dense subsemigroups of $n\times n$ matrices in both real and complex cases can be constructed \cite{J}.

In connection to hypercyclicity, the following results have inspired the results of this paper. Feldman proved that there exists a dense subsemigroup of $n \times n$ diagonal matrices generated by $n+1$ diagonal matrices \cite{F}. Abels and Manoussos in \cite{am} have shown that the minimum number of generators of a triangular non-diagonalizable abelian subsemigroup with a dense or somewhere dense orbit is $n+1$ in the real case and $n+2$ in the complex case. 

The following theorem is the main result of this article. 

\begin{theorem}\label{maint}
There exist $(n+1)$-generator dense subsemigroups of $n\times n$ triangular matrices in both real and complex cases. 
\end{theorem}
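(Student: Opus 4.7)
The plan is to construct the $n+1$ generators explicitly by combining Feldman's dense-diagonal construction with a single generic unipotent perturbation. Since the invertible lower triangular matrices form an open dense subset of $T_n(\mathbb{K})$, it will suffice to produce a dense subsemigroup of the Lie group $T_n^*(\mathbb{K})$ of invertible lower triangular matrices. This group splits as a semidirect product $T_n^* \cong U_n \rtimes D_n$, where $U_n$ is the unipotent lower triangular subgroup and $D_n \cong (\mathbb{K}^*)^n$ is the diagonal torus, and the natural projection $\pi \colon T_n^* \to D_n$ is a surjective Lie group homomorphism.

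I would first invoke Feldman's theorem \cite{F} to choose $n+1$ diagonal matrices $D_0, D_1, \ldots, D_n \in D_n(\mathbb{K})$ whose products are dense in $D_n(\mathbb{K})$, imposing that they be generic enough in a Diophantine sense that the induced scaling exponents on each weight space of $U_n$ satisfy simultaneous approximation properties. Then I would set $A_i = D_i$ for $i = 1, \ldots, n$ and $A_0 = U_0 D_0$, with $U_0 = I + N_0$ generic unipotent, say with all sub-diagonal entries of $N_0$ nonzero. A direct expansion shows that for a word $w = (i_1, \ldots, i_k)$, the product $A_w = A_{i_1}\cdots A_{i_k}$ has diagonal part $D_{i_1}\cdots D_{i_k}$ and unipotent part $\prod_{j \colon i_j = 0} C_j U_0 C_j^{-1}$, where $C_j = D_{i_1}\cdots D_{i_{j-1}}$; to first order this is $I + \sum_{j \colon i_j = 0} C_j N_0 C_j^{-1}$, whose $(p,q)$-coordinate is a weighted sum of products of the eigenvalue ratios $(D_{i_l})_{pp}/(D_{i_l})_{qq}$.

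Density of the generated semigroup will then follow from a two-scale strategy: first use a short prefix word to bring the diagonal close to the target, then append a long suffix whose overall diagonal contribution is nearly the identity (by the Kronecker density built into the choice of the $D_i$'s), with enough $0$'s placed in strategic positions so that the unipotent sum above approximates any prescribed element of $U_n$. The hardest step will be the simultaneous control of the diagonal and of the $n(n-1)/2$ weight components of the unipotent part; this is where the count $n+1$ becomes essential, as $n$ diagonal generators cannot escape a closed half-space of exponents, while $n+1$ give the positive combinations needed to reach every weight direction. Verifying this simultaneous density reduces to a Diophantine statement about generic $(n+1)$-tuples in $\mathbb{R}^n$ (or $\mathbb{C}^n$), which I expect to be the technical heart of the proof.
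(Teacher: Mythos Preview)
Your choice of generators coincides with the paper's: Feldman's $D_0,\dots,D_n$ with $D_0$ replaced by $D_0+T$ (equivalently $U_0D_0$) for a strictly lower triangular $T$ with no zero sub-diagonal entries. But your proposed route to density has two genuine gaps. First, the ``to first order'' step $\prod_j C_jU_0C_j^{-1}\approx I+\sum_j C_jN_0C_j^{-1}$ is only valid when the summands are small, whereas to hit a generic unipotent you need this expression to be far from $I$; for $n\ge 3$ the unipotent group is non-abelian and the commutator corrections are of the same order as the terms you want to control, so the linearization does not survive. Second, the promised ``Diophantine statement about generic $(n+1)$-tuples'' that would give simultaneous control of the diagonal part and all $n(n-1)/2$ weight components is neither formulated nor proved; you yourself flag it as the technical heart, and nothing in Feldman's construction supplies it for free.

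The paper avoids both difficulties by never attempting a direct simultaneous approximation. Instead it first proves (Lemma~\ref{alldiag}) that the closure of $\langle D_0+T,D_1,\dots,D_n\rangle$ already contains \emph{every} diagonal matrix: one introduces a total order on the sub-diagonal positions and shows inductively that each off-diagonal entry of the perturbation can be killed inside the closure, using only the growth bound of Lemma~\ref{bounds} and the explicit formula of Lemma~\ref{lemars} for the leading off-diagonal entry of $(D_0+T)^k$. Once all diagonals are available in the closure, an induction on $n$ (Theorem~\ref{main}) finishes the job by a block-matrix sandwich argument: the inductive hypothesis handles the upper-left $(n-1)\times(n-1)$ block, and a product $\mathrm{diag}\cdot(D_0+T)\cdot\mathrm{diag}$ produces an arbitrary last row. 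No Diophantine input beyond Feldman's original density is needed.
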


Note that $n+1$ is the least number of generators of a dense subsemigroup of diagonal matrices, and so it is also the least number of generators of a dense subsemigroup of lower triangular matrices.

\section{Proof of the main theorem}
We use Feldman's construction \cite{F} of a hypercyclic $(n+1)$-tuple of $n\times n$ diagonal matrices to construct an $(n+1)$-generator dense subsemigroup of lower triangular matrices. A set of $n\times n$ commuting matrices $(A_1,\ldots, A_k)$ with entries in $\mathbb{K}=\mathbb{R}$ or $\mathbb{C}$ is called a hypercyclic $k$-tuple, if there exists a vector $X\in \mathbb{K}^n$ such that the set 
$$\left \{A_1^{m_1}\cdots A_k^{m_k}X:m_1,\ldots,m_k \in \mathbb{N} \right \}$$
is dense in $\mathbb{K}^n$. It is straightforward to show that if $(D_0,\ldots, D_n)$ is a hypercyclic $(n+1)$-tuple of diagonal matrices, then the semigroup generated by $D_0,\ldots, D_n$, denoted by $\langle D_0,\ldots, D_n\rangle$ is dense in the set of all $n\times n$ diagonal matrices. In addition, even though Feldman's theorem is stated for $\mathbb{C}$, but the real case can be concluded easily from the complex case. 

In the sequel, all of the statements hold for matrices with entries in $\mathbb{K}=\mathbb{R}$ or $\mathbb{C}$. Therefore, we might drop the reference to the filed under consideration. We begin with the following lemmas.

\begin{lemma} \label{bounds}
Let $A$ be a lower triangular $n\times n$ matrix with diagonal entries $A_{ii}=a_i$, $1\leq i \leq n$, such that
$$0<|a_1|<\cdots<|a_n|.$$ Then there exists $\lambda>0$, which depends only on $A$, such that for all $1 \leq i,j \leq n$ and all $k\geq 1$, we have
\begin{equation}\label{Alambdaineq}
\left |A^k \right |_{ij} \leq \lambda \left |a_i \right |^k.
\end{equation}
\end{lemma}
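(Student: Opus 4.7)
The plan is to diagonalize $A$ and preserve its lower-triangular structure in the similarity transform, then read off the bound from the resulting formula for $A^k$. Since the diagonal entries $a_1,\ldots,a_n$ are the eigenvalues of the triangular matrix $A$ and have strictly increasing moduli, they are distinct, so $A$ is diagonalizable. I would write $A = PDP^{-1}$ with $D = \mathrm{diag}(a_1,\ldots,a_n)$.

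The key structural step is to show that $P$ can be chosen lower triangular. For each $l$, an eigenvector $v^{(l)}$ for $a_l$ satisfies $(A - a_l I) v^{(l)} = 0$. Reading this equation row-by-row from row $1$ and using that $A - a_l I$ is lower triangular with diagonal entries $a_p - a_l$ (nonzero for $p \ne l$), one sees inductively that $v^{(l)}_p = 0$ for all $p < l$. Thus the $l$-th column of $P$ is supported in rows $\geq l$, so $P$ is lower triangular. Its $l$-th diagonal entry is the free parameter of $v^{(l)}$ (which we take nonzero), so $P$ is invertible and $P^{-1}$ is again lower triangular.

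From $A^k = PD^k P^{-1}$ I would then write
\begin{equation*}
(A^k)_{ij} = \sum_{l=1}^n P_{il}\, a_l^k\, (P^{-1})_{lj},
\end{equation*}
observing that $P_{il} = 0$ unless $l \leq i$ and $(P^{-1})_{lj} = 0$ unless $l \geq j$, so effectively the sum runs over $j \leq l \leq i$. When $i < j$ the sum is empty and $(A^k)_{ij} = 0$, so \eqref{Alambdaineq} is trivial. When $j \leq i$, the hypothesis $|a_1| < \cdots < |a_n|$ gives $|a_l| \leq |a_i|$ for each $l$ appearing in the sum, hence
\begin{equation*}
|(A^k)_{ij}| \;\leq\; |a_i|^k \sum_{l=j}^{i} |P_{il}|\,|(P^{-1})_{lj}|.
\end{equation*}
Taking $\lambda = \max_{1 \leq i,j \leq n} \sum_{l=1}^n |P_{il}|\,|(P^{-1})_{lj}|$, a quantity manifestly depending only on $A$, yields \eqref{Alambdaineq}.

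The only point that requires genuine argument is the lower-triangularity of $P$; once this is in place, the rest is bookkeeping. This is also where the hypothesis that the $a_l$ are nonzero and have distinct moduli enters (distinctness for diagonalizability, ordering of moduli for the bound $|a_l| \leq |a_i|$).
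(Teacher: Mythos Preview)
Your argument is correct. The diagonalization is justified because the diagonal entries have pairwise distinct moduli and hence are distinct; the row-by-row verification that each eigenvector $v^{(l)}$ has $v^{(l)}_p=0$ for $p<l$ is valid, so $P$ (and therefore $P^{-1}$) is lower triangular with nonzero diagonal. The estimate $|a_l|\le |a_i|$ for $j\le l\le i$ then gives the bound with $\lambda=\max_{i,j}\sum_l |P_{il}|\,|(P^{-1})_{lj}|$, which depends only on $A$.

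As for comparison: the paper does not actually prove this lemma; it simply cites \cite{J2} for the proof. So there is no in-paper argument to compare against. Your diagonalization via a lower-triangular similarity is a clean, self-contained route and would be a perfectly acceptable replacement for the external citation. An alternative proof one sometimes sees proceeds by induction on $k$ (or on $n$, peeling off the last row and column), but your approach has the advantage of making the constant $\lambda$ explicit in terms of the eigenvector matrix.
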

The proof of Lemma \ref{bounds} can be found in \cite{J2}. 

Next, we define a total order on the set $\Delta=\{(i,j): 1\leq j \leq i \leq n\}$ as follows:
$$(i,j) \preceq (r,s) \Longleftrightarrow i-j<r-s~\vee~ (i-j=r-s~ \wedge~i \leq r).$$
 Let ${\cal T}_{(r,s)}$ be the set of $n\times n$ lower triangular matrices $T$ such that $T_{ij}=0$ for all $(i,j) \prec (r,s)$. Clearly, if $(r,s) \preceq (t,u)$, then ${\cal T}_{(u,v)} \subseteq {\cal T}_{(r,s)}$. Moreover, each ${\mathcal T}_{(r,s)}$ is closed under matrix multiplication, $(r,s) \in \Delta$.

\begin{lemma}\label{lemars}
Let $D=(a_1,\ldots, a_n)$ be a diagonal matrix such that 
\begin{equation}\label{ineqdiag}
0<|a_1|<\cdots < |a_n|.
\end{equation}
Let $(r,s) \in \Delta$ and $T\in {\cal T}_{(r,s)}$. If $A=D+T$, then
\begin{equation}\label{Akrs}
\left (A^k \right )_{rs}=\left ( \dfrac{a_r^k-a_s^k}{a_r-a_s} \right ) T_{rs}.
\end{equation}
\end{lemma}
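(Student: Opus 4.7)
The plan is to expand $A^{k}=(D+T)^{k}$ noncommutatively and use a \emph{level} argument to show that almost all of the $2^{k}$ monomials are forced to vanish at position $(r,s)$. I will treat the case $r>s$, for which the formula is well-defined. Call a lower triangular matrix $M$ of \emph{level} $\geq\ell$ if $M_{ij}=0$ whenever $i-j<\ell$. Since $T\in{\cal T}_{(r,s)}$, the only $(i,j)$ at which $T$ may be nonzero satisfy either $i-j>r-s$ or $i-j=r-s$ with $i\geq r$; in particular, $T$ has level $\geq r-s$. The diagonal matrix $D$ has level $\geq 0$. The basic observation is that levels add under multiplication: if $X$ has level $\geq p$ and $Y$ has level $\geq q$, then $(XY)_{ij}=\sum_{k}X_{ik}Y_{kj}$ can be nonzero only when $i-k\geq p$ and $k-j\geq q$, so $i-j\geq p+q$.

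Now I would expand $A^{k}=(D+T)^{k}$ as a sum of $2^{k}$ noncommutative monomials in $D$ and $T$. A monomial that contains exactly $m$ copies of $T$ and $k-m$ copies of $D$ has level $\geq m(r-s)$ by the additivity above. To contribute to the $(r,s)$ entry we need level $\leq r-s$, and since $r-s>0$ this forces $m\leq 1$. The $m=0$ monomial is $D^{k}$, which is diagonal and therefore vanishes at $(r,s)$ because $r\neq s$.

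So only the $k$ monomials with exactly one $T$ factor survive, namely $D^{i}TD^{k-1-i}$ for $i=0,\ldots,k-1$. Since $D$ is diagonal, $(D^{i}TD^{k-1-i})_{rs}=a_{r}^{i}\,T_{rs}\,a_{s}^{k-1-i}$. Summing and using $a_{r}\neq a_{s}$ (which follows from $|a_{r}|\neq|a_{s}|$),
\begin{equation*}
(A^{k})_{rs}=T_{rs}\sum_{i=0}^{k-1}a_{r}^{i}a_{s}^{k-1-i}=\left(\frac{a_{r}^{k}-a_{s}^{k}}{a_{r}-a_{s}}\right)T_{rs},
\end{equation*}
which is the claimed identity.

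The only nontrivial input is the level-additivity lemma, and it is a one-line consequence of the matrix multiplication formula; the rest of the argument is pure bookkeeping, so I do not anticipate a real obstacle. The main point to double-check is the combinatorial claim that only $m\leq 1$ contributes, which relies in an essential way on the fact that the order $\preceq$ is refined enough to make ${\cal T}_{(r,s)}$ trap the level of $T$ at $\geq r-s$.
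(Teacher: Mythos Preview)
Your proof is correct. The level argument is sound: $T\in\mathcal{T}_{(r,s)}$ indeed forces $T$ to have level $\ge r-s$, levels add under multiplication, and so any monomial with two or more $T$'s has level $\ge 2(r-s)>r-s$ and dies at position $(r,s)$. The surviving $k$ monomials $D^{i}TD^{k-1-i}$ are computed correctly, and the geometric sum gives the claimed formula. Your restriction to $r>s$ is appropriate; the lemma is only invoked in the paper for $(r,s)\succeq(2,1)$, i.e.\ strictly below the diagonal.

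The paper takes a different route: rather than expanding $(D+T)^{k}$ into $2^{k}$ monomials, it proves by a direct computation that for any $T_{1},T_{2}\in\mathcal{T}_{(r,s)}$ and diagonal $D_{1},D_{2}$ one has
\[
\bigl[(D_{1}+T_{1})(D_{2}+T_{2})\bigr]_{rs}=(D_{1})_{rr}(T_{2})_{rs}+(T_{1})_{rs}(D_{2})_{ss},
\]
the intermediate terms vanishing because $(r,l)\prec(r,s)$ for $s<l<r$. This two-factor identity, together with the fact that $\mathcal{T}_{(r,s)}$ is closed under multiplication, yields the formula by a short induction on $k$. Your approach is a bit more structural: by introducing the level grading you isolate in one stroke \emph{why} higher-order terms vanish, and the geometric sum then falls out transparently. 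The paper's approach is shorter and avoids the auxiliary notion, but hides the same mechanism inside the inductive step. Both arguments ultimately rest on the same fact, namely that $\mathcal{T}_{(r,s)}$ sits at level $\ge r-s$, so products of two such matrices overshoot the $(r,s)$ slot.
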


\begin{proof}
For $T_1,T_2 \in {\cal T}_{(r,s)}$ and diagonal matrices $D_1$ and $D_2$, one has
\begin{eqnarray}\nonumber
\left [ (D_1+T_1)(D_2+T_2) \right ]_{rs}&=&\sum_{l=1}^n (D_1+T_1)_{rl}(D_2+T_2)_{ls} \\ \nonumber
&=& \sum_{l=s}^r (D_1+T_1)_{rl}(D_2+T_2)_{ls}\\ \nonumber
&=& (D_1+T_1)_{rs}(D_2+T_2)_{ss}+(D_1+T_1)_{rr}(D_2+T_2)_{rs}\\ \label{Akrs2}
&=& (T_1)_{rs}(D_1)_{ss}+(D_2)_{rr}(T_2)_{rs},
\end{eqnarray}
since for $s<l<r$, we have $(r,l) \preceq (r,s)$, and so $(D_1+T_1)_{rl}=(T_1)_{rl}=0$. The equation \eqref{Akrs} then follows from \eqref{Akrs2} and induction on $k$. 
\end{proof}

\begin{lemma}\label{alldiag}
Let $\langle D_\alpha: \alpha \in J \cup \{0\} \rangle$ be a dense subsemigroup of diagonal matrices, where $J$ is a finite index set. Suppose that the diagonal matrix $D_0=(a_1,\ldots, a_n)$ is such that \eqref{ineqdiag} holds. Let $T$ be a lower triangular matrix with $T_{ii}=0$ for all $1\leq i \leq n$. Then the closure of the semigroup generated by $\langle D_0+T, D_\alpha: \alpha \in J \rangle$ contains all diagonal matrices. 
\end{lemma}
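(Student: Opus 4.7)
The plan is to proceed by $\preceq$-backward induction on the position $\tau(T) := \min_\preceq\{(i,j) : T_{ij}\ne 0\}$ of the leading nonzero off-diagonal entry of $T$, with the convention that $\tau(0)$ strictly exceeds every element of $\Delta$. The base case $T=0$ is immediate, since then $D_0+T=D_0$ and the semigroup in question is $\langle D_0,D_\alpha:\alpha\in J\rangle$, which is dense in the diagonals by hypothesis.

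For the inductive step, set $(r,s):=\tau(T)$, write $S:=\langle D_0+T,D_\alpha\rangle$, and assume the lemma has been established whenever the leading position is $\succ (r,s)$. It suffices to produce a matrix $A'\in\bar S$ of the form $D_0'+T'$, where $D_0'$ is a diagonal matrix satisfying the strict monotonicity \eqref{ineqdiag}, $\tau(T')\succ(r,s)$, and $\langle D_0',D_\alpha\rangle$ is still dense in the diagonals. Then the inductive hypothesis applied to this new data yields $\overline{\langle A',D_\alpha\rangle}\supseteq$ all diagonals, and this closure is contained in $\bar S$.

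To construct $A'$ I look at sandwich products $A^{k_1}\,D_\beta\,A^{k_2}$ with $D_\beta$ diagonal. By Lemma~\ref{lemars} and the multiplication rule \eqref{Akrs2}, iterated twice, the $(r,s)$-entry of this product equals
\[
\frac{T_{rs}}{a_r-a_s}\Bigl[(D_\beta)_{ss}\,a_s^{k_2}(a_r^{k_1}-a_s^{k_1})+(D_\beta)_{rr}\,a_r^{k_1}(a_r^{k_2}-a_s^{k_2})\Bigr],
\]
the diagonal is $D_0^{k_1+k_2}D_\beta$, and all entries at positions $\prec(r,s)$ vanish because $\mathcal T_{(r,s)}$ is closed under multiplication. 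Setting the bracketed expression to zero gives a single linear constraint relating $(D_\beta)_{rr}$ and $(D_\beta)_{ss}$, leaving the remaining entries of $D_\beta$ free. Using the density of $\langle D_0,D_\alpha\rangle$, I can pick $D_\beta$ satisfying this constraint with the other entries adjusted so that $D_0':=D_0^{k_1+k_2}D_\beta$ has strictly increasing absolute values on the diagonal, and then form $A'=A^{k_1}D_\beta A^{k_2}$.

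The principal obstacles are two-fold. First, the diagonal $D_\beta$ must be realized inside $S$, not merely inside $\langle D_0,D_\alpha\rangle$; every occurrence of $D_0$ in an approximating product must be replaced by $A$, which introduces off-diagonal errors that are then amplified by the outer factors $A^{k_1}$ and $A^{k_2}$. This is controlled quantitatively by Lemma~\ref{bounds}: demanding that the approximation of $D_\beta$ be sharper than $|a_n|^{-(k_1+k_2)}$ forces the amplified off-diagonal error below any prescribed tolerance, so that passing to a limit of such products lands an $A'$ in $\bar S$. Second, the new diagonal $D_0'$ must still generate, together with the $D_\alpha$'s, a dense subsemigroup of the diagonals; this is handled by absorbing the perturbation factor $D_\beta$ into the $D_\alpha$-semigroup side of the inductive hypothesis and exploiting that every diagonal, in particular $D_0'^{-1}D_0$, lies in $\overline{\langle D_0,D_\alpha\rangle}$. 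A careful coordination of $k_1,k_2$ with the approximation quality then closes the induction.
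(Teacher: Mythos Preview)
Your inductive scheme is reasonable, but the handling of the ``first obstacle'' contains a real gap. When $D_\beta$ is approximated by a product $\prod_{\alpha\in J\cup\{0\}}D_\alpha^{e_\alpha}$ and $D_0^{e_0}$ is replaced by $A^{e_0}$, the resulting matrix $P'$ has off-diagonal entries that are \emph{bounded but do not tend to zero} as the diagonal approximation improves. Lemma~\ref{bounds} only gives $|(P')_{ij}|\le\lambda|(P')_{ii}|\approx\lambda|(D_\beta)_{ii}|$, and in fact Lemma~\ref{lemars} shows that $(P')_{rs}\to\dfrac{(D_\beta)_{rr}T_{rs}}{a_r-a_s}$, not to $0$. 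Consequently the $(r,s)$-entry of $A^{k_1}P'A^{k_2}$ acquires the extra term $a_r^{k_1}\cdot\dfrac{(D_\beta)_{rr}T_{rs}}{a_r-a_s}\cdot a_s^{k_2}$, which your bracketed expression omits; the linear constraint you impose on $(D_\beta)_{rr},(D_\beta)_{ss}$ is therefore not the right one. Demanding that the approximation of $D_\beta$ be sharper than $|a_n|^{-(k_1+k_2)}$ controls only the \emph{diagonal} discrepancy and does nothing about this intrinsic off-diagonal contribution.

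The paper sidesteps this issue by a cleaner device: it writes the approximating product for an arbitrary diagonal target $B$ as $\bigl(\prod_{\alpha\in J}D_\alpha^{d_\alpha^k}\bigr)\cdot D_0^{d_0^k}$ and observes that the first factor already lies in $S=\langle D_0+T,D_\alpha:\alpha\in J\rangle$, so only the $D_0$-part is replaced by $A$. Thus $M_k=\bigl(\prod_{\alpha\in J}D_\alpha^{d_\alpha^k}\bigr)A^{d_0^k}$ lies in $S$ exactly, with no replacement error to manage; a subsequential limit $\eta(B)\in\bar S$ has diagonal $B$ and $(r,s)$-entry $B_{rr}T_{rs}/(a_r-a_s)$. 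Two such limits $\eta(B_1),\eta(B_2)$ are then multiplied, with $B_1,B_2$ chosen so that $B_1B_2=D_0$ and $(\eta(B_1)\eta(B_2))_{rs}=0$; the product has diagonal exactly $D_0$, so your ``second obstacle'' never arises. Your own treatment of that obstacle is also incomplete: knowing that $D_0'^{-1}D_0\in\overline{\langle D_0,D_\alpha\rangle}$ does not yield $D_0\in\overline{\langle D_0',D_\alpha:\alpha\in J\rangle}$, which is what the inductive hypothesis would need.
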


\begin{proof}
We prove that if $T\in {\cal T}_{(r,s)}$, where $(r,s) \succeq (2,1)$, then the closure of $\langle D_0+T,D_\alpha: \alpha \in J \rangle$ includes $D_0+T^\prime$ for some $T^\prime \in {\cal T}_{(r,s)^\prime}$, where $(r,s)^\prime$ is the successor of $(r,s)$ in the ordered set $(\Delta,\preceq)$. Equivalently, we show that there exists $T^\prime \in {\cal T}_{(r,s)}$ such that $D_0+T^\prime$ belongs to the closure of $\langle D_0+T,D_\alpha: \alpha \in J \rangle$ and in addition $(T^\prime)_{rs}=0$. It will then follow from a finite induction on the ordered set $(\Delta,\preceq)$, starting with $(2,1) \in \Delta$ and ending in $(n,1) \in \Delta$, that $D_0$ belongs to the closure of $\langle D_0+T,D_\alpha: \alpha \in J \rangle$, and the claim follows from the assumption that $\langle D_\alpha: \alpha \in J \cup \{0\} \rangle$ is dense in the set of diagonal matrices. 

Let $A=D_0+T$. By our density assumption, for each $\alpha \in J\cup \{0\}$, there exists a sequence of exponents $\{d_\alpha^k\}_{k=1}^\infty$ such that $\lim_{k\rightarrow \infty} d_\alpha^k =\infty$ and 
\begin{equation}\label{defB}
\left ( \prod_{\alpha \in J} D_\alpha^{d_\alpha^k} \right )D_0^{d_0^k} \rightarrow B,
\end{equation}
as $k\rightarrow \infty$, where $B$ is an arbitrary diagonal matrix. Let
$$M_k=\left ( \prod_{\alpha \in J} D_\alpha^{d_\alpha^k} \right )A^{d_0^k}.$$
It follows from \eqref{defB} that for all $1 \leq i \leq n$, we have
$$\lim_{k \rightarrow \infty}(M_k)_{ii}=B_{ii}.$$
Since ${\cal T}_{(r,s)}$ is closed under multiplication, we have $M_k-B \in {\cal T}_{(r,s)}$. Moreover, it follows from Lemma \ref{bounds} that for all $1\leq j \leq i$:
\begin{eqnarray} \nonumber
\left | M_k \right |  _{ij}&= & \left |\left ( \prod_{\alpha \in J} D_\alpha^{d_\alpha^k} \right )A^{d_0^k} \right |_{ij} \\ \nonumber
& = &\left |  \prod_{\alpha \in J} D_\alpha^{d_\alpha^k}  \right |_{ii} \cdot \left |A^{d_0^k} \right |_{ij} \\ \nonumber
& \leq &  \lambda \left |  \prod_{\alpha \in J} D_\alpha^{d_\alpha^k}  \right |_{ii} \cdot \left |D_0^{d_0^k} \right |_{ii}
 \leq  \lambda \left | \left ( \prod_{\alpha \in J} D_\alpha^{d_\alpha^k} \right ) D_0^{d_0^k} \right |_{ii} \rightarrow \lambda |B_{ii}|,
\end{eqnarray}
as $k \rightarrow \infty$. In particular, for each $(i,j)$ with $1\leq i,j \leq n$, the sequence $\{(M_k)_{ij}\}_{k=1}^\infty$ is a bounded sequence, hence, by deriving a common convergent subsequence, we obtain a matrix $M$ in the closure of $\langle D_0+T,D_\alpha: \alpha \in J \rangle$ such that $M-B \in {\cal T}_{(r,s)}$. 

Next, by Lemma \ref{lemars}, we have
\begin{eqnarray}\nonumber
(M_k)_{rs} & = & \left [\left ( \prod_{\alpha \in J} D_\alpha^{d_\alpha^k} \right )A^{d_0^k} \right ]_{rs} \\ \nonumber
&=&\left ( \prod_{\alpha \in J} D_\alpha^{d_\alpha^k} \right )_{rr} \left (A^{d_0^k} \right )_{rs} \\ \nonumber
& =& \left ( \prod_{\alpha \in J} D_\alpha^{d_\alpha^k} \right )_{rr} \left (\dfrac{a_r^{d_0^k}-a_s^{d_0^k}}{a_r-a_s} \right ) T_{rs}\\ \nonumber
&=& \left [\left ( \prod_{\alpha \in J} D_\alpha^{d_\alpha^k} \right ) D_0^{d_0^k}\right ]_{rr}\left (\dfrac{1-(a_s/a_r)^{d_0^k}}{a_r-a_s} \right ) T_{rs}\\ \nonumber
& \rightarrow & \dfrac{B_{rr}T_{rs}}{a_r-a_s},
\end{eqnarray}
as $k \rightarrow \infty$, since $r>s$, and so $|a_s/a_r|<1$. 
It follows that
$$M_{rs}= \lim_{l \rightarrow \infty} (M_{k})_{rs}= \dfrac{B_{rr}T_{rs}}{a_r-a_s}.$$
So far, we have proved that for any arbitrary diagonal matrix $B$, there exists $M=\eta(B)$ in the closure of $\langle D_0+T,D_\alpha: \alpha \in J\rangle$ such that $M-B \in {\cal T}_{(r,s)}$ and $M_{rs}=B_{rr}T_{rs}/(a_r-a_s)$. Let $B_2$ be the diagonal matrix with $(B_2)_{rr}=-1$ and $(B_2)_{ii}=1$ for all $i\neq r$. Also, let $B_1=B_2D_0$. Finally, let $M_1=\eta(B_1)$ and $M_2=\eta(B_2)$. Then for $T^\prime=M_1M_2-D_0$, we have $T^\prime \in {\cal T}_{(r,s)}$ and $D_0+T^\prime=M_1M_2$, which belongs to the closure of $\langle D_0+T,D_\alpha: \alpha \in J \rangle$. Moreover, 
\begin{eqnarray}\nonumber
\left ( T^\prime  \right )_{rs}=(M_1M_2)_{rs}&=&\sum_{i=1}^n (M_1)_{ri}(M_2)_{is}\\ \nonumber
&=& (M_1)_{rs}(M_2)_{ss}+(M_1)_{rr}(M_2)_{rs} \\ \nonumber
&=& \dfrac{(B_1)_{rr}T_{rs}}{a_r-a_s} (B_2)_{ss}+(B_1)_{rr} \dfrac{(B_2)_{rr}T_{rs}}{a_r-a_s} \\ \nonumber
&=& \dfrac{-a_rT_{rs}}{a_r-a_s}+\dfrac{a_r T_{rs}}{a_r-a_s}=0,
\end{eqnarray}
and the claim follows. 
 \end{proof}

\begin{theorem}\label{main}
Suppose that the semigroup $\langle D_\alpha: \alpha \in J\cup \{0\} \rangle$ of $n\times n$ diagonal matrices is dense in the set of all $n\times n$ diagonal matrices with entries in $\mathbb{K}=\mathbb{R}$ or $\mathbb{C}$, where $J$ is a finite index set. Suppose that $D_0=(a_1,\ldots, a_n)$ is such that the inequalities in \eqref{ineqdiag} hold. Let $T$ be a matrix such that 
\begin{equation}\label{condtr}
T_{ij} \neq 0 \Longleftrightarrow 1 \leq j <i \leq n.
\end{equation}
 Then $\langle D_0+T,D_\alpha: \alpha \in J\rangle$ is dense in the set of all $n\times n$ lower triangular matrices with entries in $\mathbb{K}$. 
\end{theorem}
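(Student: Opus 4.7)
The plan is to prove by induction on $(r,s)\in\Delta$ in the order $\preceq$ the following strengthened claim: for every $(r,s)\in\Delta$ with $r>s$, every diagonal matrix $D$, and every choice of values $c_{ij}\in\mathbb{K}$ at off-diagonal positions $(i,j)\preceq(r,s)$ with $i>j$, the closure $S$ of $\langle D_0+T,D_\alpha:\alpha\in J\rangle$ contains a matrix $M$ with $M_{ii}=D_{ii}$ and $M_{ij}=c_{ij}$ at all those positions. Taking $(r,s)=(n,1)$ then yields density in all lower triangular matrices, since the diagonal data is already attainable by Lemma \ref{alldiag}.

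The key tool extends the $\eta$-construction of Lemma \ref{alldiag}: whenever a \emph{seed} $A^\star=D_0+T^\star\in S$ with $T^\star\in\mathcal{T}_{(r,s)}$ and $T^\star_{rs}\neq 0$ is available, the same subsequential-limit argument applied to $\bigl(\prod_\alpha D_\alpha^{d_\alpha^k}\bigr)(A^\star)^{d_0^k}$ produces, for every diagonal $B$, an element $\eta^\star(B)\in S$ satisfying $\eta^\star(B)-B\in\mathcal{T}_{(r,s)}$ and $\eta^\star(B)_{rs}=B_{rr}T^\star_{rs}/(a_r-a_s)$. Rerunning the two-fold product calculation at the end of Lemma \ref{alldiag} yields $\eta^\star(B_1)\eta^\star(B_2)\in S$ with diagonal $(B_1)_{ii}(B_2)_{ii}$ and $(r,s)$-entry $\frac{T^\star_{rs}(B_1)_{rr}}{a_r-a_s}\bigl[(B_2)_{rr}+(B_2)_{ss}\bigr]$, while still vanishing at all off-diagonal positions $\prec(r,s)$. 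Since the ratio $(B_2)_{ss}/(B_2)_{rr}$ is free after imposing the diagonal constraint $(B_1)_{ii}(B_2)_{ii}=D_{ii}$, any prescribed $(r,s)$-entry can be realized simultaneously with any prescribed diagonal.

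To absorb the (possibly nonzero) prescribed values at earlier off-diagonal positions, I would then form the ordered product $\prod_{(i,j)} M^{(i,j)}$ over all $(i,j)\preceq(r,s)$ with $i>j$, taken in $\preceq$-order, where each $M^{(i,j)}$ is a matrix of the above type built for the single position $(i,j)$. Expanding entry by entry, the $(i,j)$-entry of the partial product is an explicit linear function of the prescribed entry of the newly-appended factor $M^{(i,j)}$, with the remaining contribution determined by the already-chosen earlier factors (in particular by their uncontrolled entries at position $(i,j)$, which are known once those factors are fixed). Choosing the prescribed entries sequentially, one at each step absorbs the accumulated residuals and matches the target $c_{ij}$.

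The main obstacle is to produce, at each inductive stage, a valid seed $A^\star=D_0+T^\star\in S$ with $T^\star\in\mathcal{T}_{(r,s)}$ \emph{and} $T^\star_{rs}\neq 0$. The inductive hypothesis applied with $c_{ij}=0$ for all $(i,j)\preceq(r,s)^-$ and diagonal $D_0$ supplies such an $A^\star$ with $T^\star\in\mathcal{T}_{(r,s)}$, so the only issue is the non-vanishing $T^\star_{rs}\neq 0$; this is exactly where condition \eqref{condtr} is essential. I expect this to reduce to a path-sum computation in the spirit of Lemma \ref{lemars}, identifying $T^\star_{rs}$ as a nontrivial rational expression in the original entries $\{T_{ij}\}$ and the $a_i$'s whose non-vanishing is forced by \eqref{condtr} together with the strict ordering $|a_1|<\cdots<|a_n|$. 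In the exceptional case of an accidental algebraic cancellation for specific parameter values, a small perturbation of the subsequential choices in the $\eta^\star$-construction moves the seed off the (lower-dimensional) degenerate locus, completing the induction and hence the theorem.
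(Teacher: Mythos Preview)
Your inductive scheme on $(\Delta,\preceq)$ differs from the paper's induction on $n$, and it has a genuine gap at the seed step. At stage $(r,s)$ you need an element $A^\star=D_0+T^\star\in S$ with $T^\star\in\mathcal{T}_{(r,s)}$ and $T^\star_{rs}\neq 0$; the inductive hypothesis delivers the first condition but says nothing about the second. Your assertion that $T^\star_{rs}$ is ``a nontrivial rational expression \ldots whose non-vanishing is forced by \eqref{condtr}'' is not substantiated: the seed is assembled from a chain of $\eta^\star$-limits and products over all earlier positions, and you give no formula for its $(r,s)$-entry, let alone one that is visibly nonzero. The fallback perturbation argument is also not valid as written. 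In the $\eta^\star$-construction at a fixed position $(i,j)$, both the diagonal and the $(i,j)$-entry of the limit are \emph{uniquely determined} (this is precisely what Lemma~\ref{lemars} and the computation in Lemma~\ref{alldiag} establish); only entries at positions $\succ(i,j)$ can differ across subsequences. To exploit that freedom you would have to trace how those uncontrolled residuals, fed forward through all subsequent products, influence the $(r,s)$-entry of the final seed, and then prove the resulting range of values is not $\{0\}$. That analysis is absent, and without it the induction does not close. A secondary issue: appending a later factor $M^{(i',j')}$ multiplies each already-fixed entry $c_{ij}$ by $(M^{(i',j')})_{jj}$, so to preserve earlier entries you must force every factor's diagonal to be the identity and adjoin the target diagonal separately; you do not address this.

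By contrast, the paper sidesteps the seed problem entirely by inducting on $n$. After Lemma~\ref{alldiag} places all diagonal matrices in the closure, the inductive hypothesis handles the upper-left $(n-1)\times(n-1)$ block; left-and-right multiplication by suitable diagonals then annihilates the uncontrolled last row, so every block matrix $\left[\begin{smallmatrix}R&0\\ \mathbf{0}&x\end{smallmatrix}\right]$ lies in the closure. A single sandwich $\left[\begin{smallmatrix}R&0\\ \mathbf{0}&x\end{smallmatrix}\right](D_0+T)\left[\begin{smallmatrix}S&0\\ \mathbf{0}&1\end{smallmatrix}\right]$ with diagonal $S$ then realizes any prescribed last row, using only that the bottom row of $T$ has no zero entries---exactly what \eqref{condtr} guarantees.
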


\begin{proof}By Lemma \ref{alldiag}, the closure of $\langle D_0+T,D_\alpha: \alpha \in J\rangle$ contains all diagonal matrices. 

Proof of Theorem \ref{main} is by induction on $n$. The case $n=1$ is trivial. Therefore, suppose the claim is true for $n-1$, where $n\geq 2$. Given an $n\times n$ matrix $X$, let $\sigma(X)$ be the upper left $(n-1)\times (n-1)$ block of $X$. In particular, the semigroup generated by $\langle \sigma(D_\alpha): \alpha\in J \cup \{0\} \rangle$ is dense in the set of diagonal $(n-1)\times (n-1)$ matrices. It follows from the inductive hypothesis that 
$\langle \sigma(D_0+T), \sigma(D_\alpha): \alpha \in J \rangle$
is dense in the set of $(n-1)\times (n-1)$ lower triangular matrices. In particular, for each lower triangular $(n-1)\times (n-1)$ matrix $R$, there exists a sequence 
$$L_i=\begin{bmatrix} R_i & 0 \\ V_i & x_i\end{bmatrix} \in \bar \Lambda ,~i\geq 1,$$ such that $R_i \rightarrow R$ as $i\rightarrow \infty$, and $x_i \neq 0$. Here $\bar \Lambda$ is the closure of $\Lambda=\langle D_0+T,D_\alpha: \alpha \in J \rangle$. Choose a sequence of nonzero numbers $\{a_i\}_{i=1}^\infty$ such that $\lim_{i\rightarrow \infty}a_iV_i=0$. Then, we have
\begin{equation}\label{mult3}
\begin{bmatrix} I_{n-1} & 0 \\ 0 & a_i \end{bmatrix} L_i \begin{bmatrix} I_{n-1} & 0 \\ 0 & x/(a_ix_i) \end{bmatrix}=\begin{bmatrix} R_i & 0 \\ a_iV_i & x \end{bmatrix} \rightarrow \begin{bmatrix} R & 0 \\ {\bf 0} & x \end{bmatrix}, 
\end{equation}
as $i \rightarrow \infty$, which implies that: 
\begin{equation}\label{limmat}
\begin{bmatrix}R & 0 \\ {\bf 0} & x \end{bmatrix} \in \bar \Lambda,
\end{equation}
for all $(n-1)\times (n-1)$ lower triangular matrices and any given $x\in \mathbb{R}$, since the three matrices on the left side of equation \eqref{mult3} belong to $\bar \Lambda$ (note that all diagonal matrices belong to $\bar \Lambda$ by Lemma \ref{alldiag}). 
Next, we show that every $n\times n$ triangular matrix is a product of two matrices of the form \eqref{limmat} with $D_0+T$. We write
$$D_0+T=\begin{bmatrix}A^\prime & 0 \\ {V} & a_n \end{bmatrix}.$$
Let 
$$B=\begin{bmatrix}U & 0 \\ {W} & z \end{bmatrix}$$ 
be any lower triangular matrix such that vector $W$ has no zero entries, and $z \neq 0$. Let $x={z}/{a_n}$ and choose an invertible diagonal matrix $S$ such that $xVS=W$. Finally, let $R=US^{-1}(A^\prime)^{-1}$. One has
$$\begin{bmatrix} R & 0 \\ {\bf 0} & x \end{bmatrix}  \begin{bmatrix}A^\prime & 0 \\ {V} & a_n \end{bmatrix} \begin{bmatrix} S & 0 \\ {\bf 0} & 1 \end{bmatrix}=\begin{bmatrix} RA^\prime S & 0 \\ xVS & xa_n \end{bmatrix}=\begin{bmatrix}U & 0 \\ {W} & z \end{bmatrix},$$
which implies that $B \in \bar \Lambda$. Since $\bar \Lambda$ is closed, the conditions that $W$ has no zero entries and $z\neq 0$ can be removed to conclude that $\bar \Lambda$ contains all lower triangular matrices, and the proof is completed.
\end{proof}

We are now ready to state the proof of Theorem \ref{maint}. 
\\
\\
\noindent \emph{Proof of Theorem \ref{maint}.}
It follows from \cite[Theorem 3.4]{F} and its proof that there exists a dense subsemigroup of diagonal matrices generated by $n+1$ diagonal matrices $D_0,\ldots, D_n$, where the diagonal entries of $D_0=(a_1,\ldots, a_n)$ are such that $|a_i| \neq |a_j|$ for all $i\neq j$. Via a permutation, we can assume that, in addition, the entries of $D_0$ satisfy the inequalities in \eqref{ineqdiag}. It then follows from Theorem \ref{main} that for any matrix $T$ that satisfies condition \eqref{condtr}, the semigroup generated by $D_0+T,D_1,\ldots, D_n$ is dense in the set of $n\times n$ traingular matrices. 
\hfill $\square$


\begin{thebibliography}{20}

\bibitem{am} H. Abels and A. Manoussos, \emph{Topological generators of abelian Lie groups and hypercyclic finitely generated abelian semigroups of matrices}, Adv. Math. 229 (2012), no. 3, 1862--1872.

\bibitem{AV} H. Abels and E.B. Vinberg, \emph{Generating semisimple groups by tori}, J. Algebra 328 (2011), no 1, 114--121.

\bibitem{Au} H. Auerbach, \emph{Sur les groupes lin'{e}aires born'{e}s (III)}, Studia Math. 5 (1935), 43--49.
\bibitem{BG} E. Breuillard and T. Gelander, \emph{On dense free subgroups of Lie groups}, J. Algebra 261 (2003), no. 2, 448--467.
\bibitem{cao}W. Cao, \emph{Discrete and dense subgroups acting on complex hyperbolic space}, Bull. Aust. Math. Soc. 78 (2008), no. 2, 211--224.


\bibitem{F}N.S. Feldman, \emph{Hypercyclic tuples of operators and somewhere dense orbits}, J. Math. Anal.
Appl. 346 (2008), 82--98.

\bibitem{J2}M. Javaheri, \emph{Semigroups of matrices with dense orbits,} Dyn. Syst. 26 (3) (2011), 235--243.

\bibitem{J}M. Javaheri, \emph{Maximally transitive semigroups of $n \times n$ matrices}, J. Math. Anal. Appl. 401 (2013), 743--753. 


\bibitem{Ku} M. Kuranishi, \emph{On everywhere dense imbedding of free groups in Lie groups}, Nagoya Math. J. 2 (1951), 63--71. 


\bibitem{xia} X. Wang, \emph{Dense subgroups of $n$-dimensional M\"obius groups}, math. Z. 243 (2003), no. 4, 643--651.


\end{thebibliography}
\end{document}